\definecolor{LightCyan}{rgb}{0.88,1,1}
\newtheorem{assumption}{Assumption}[section]
\newcommand{\HEI}[1]{\bf Hybrid-EI}
\newtheorem{theorem}{Theorem}
\newtheorem{corollary}{Corollary}
\newtheorem{remark}{Remark}
\begin{document}
%
\title{\LARGE \textbf{Synchronization of Complex Dynamical Networks via Event-Triggered Pinning Impulses}}
%
%
%

\author{Kexue~Zhang 
\thanks{This work was supported by the Natural Sciences and Engineering Research Council of Canada (NSERC) under the grant {RGPIN-2022-03144}.}
\thanks{K. Zhang is with the Department
of Mathematics and Statistics, Queen's University, Kingston, Ontario K7L 3N6, Canada (e-mail: kexue.zhang@queensu.ca).}
 }

\maketitle
\thispagestyle{empty}
%
\begin{abstract}
This article studies the synchronization problem of complex dynamical networks. The impulsive control method is considered with a novel event-triggered pinning algorithm. Sufficient conditions on the network topology are obtained to ensure network synchronization. It is shown that synchronization can be realized with a careful selection of the pinning nodes. Furthermore, an adaptive coupling strength is incorporated into the network to allow network synchronization with an arbitrary selection of the pinning nodes. An example of a network with node dynamics described by the Chen system is studied to demonstrate the theoretical results.

\end{abstract}
%
\begin{IEEEkeywords}
Complex dynamical network, event-triggered control, pinning impulsive control, synchronization, Zeno behavior
\end{IEEEkeywords}

\section{Introduction}\label{Sec1}

Complex dynamical networks (CDNs) consist of a large collection of nodes, which are normally modeled by dynamical systems, and these nodes are connected according to certain topological structures. Many interconnected systems in nature can be described by CDNs (see, e.g., \cite{EE:2012,CWW:2007}), such as, multi-agent systems, electrical power grids, the World Wide Web, biological and artificial neural networks, etc. As a typical collective behavior, synchronization of CDNs has been investigated extensively due to its wide applications in various scientific fields ranging from biology, engineering to physics and sociology (see, e.g., \cite{FD-FB:2014,YT-FQ-HG-JK:2014}). 

Numerous synchronization problems have been investigated with a wide variety of control methods. Among them, impulsive control paradigm has been proved to be powerful and efficient in network synchronization \cite{CWW:2007,YT-FQ-HG-JK:2014}. As a particular type of feedback control, impulsive control uses impulses, which are state abrupt changes or jumps at a sequence of discrete times, to achieve the network synchronization \cite{TY:2001,BM-EYR:2003}. The sequence of these impulse times is normally determined in two fashions. If the impulse times are pre-scheduled, the corresponding control method is called time-triggered impulsive control. Sufficient conditions on the upper bound of the times between two consecutive impulses are naturally required so that the impulses occur frequently enough to realize the network synchronization (see \cite{XL-KZ:2019} and references therein). Another way to trigger an impulse is via a certain event which is the violation of some well-designed triggering condition by the network synchronization error. Thus, the impulse times are implicitly defined by the event occurrence. Compared with time-triggered impulsive control, potentially less impulses can be activated by the event-triggered impulsive control as the impulses are triggered only when needed. To ensure the feasibility of the event-triggered impulses, it is vital important to exclude Zeno behavior, a phenomenon of infinitely many impulses over a finite time period, from CDNs with impulses. Recently, many event-triggering algorithms have been successfully integrated with the impulsive control strategy for stabilization of nonlinear systems (see, e.g., \cite{KZ-EB:2022,XL-XY-JC:2020,BJ-JL-XL-JQ:2021}) and synchronization of CDNs (see, e.g., \cite{XL-JC-XL-MA-UAA:2020,KU-FAR-XL-RR:2021,SS-KU-DG-RR:2022}).

To achieve the network synchronization, high dimensionality as a typical characteristic of CDNs makes it costly and sometimes impractical to control all the network nodes. Therefore, the paradigm to control a portion of nodes, known as pinning control, has received increasing interests in the past few decades \cite{ROG-MCC-GHS:1997,WY-GC-JL:2009,YO-MJ-XY:2016,PD-FG-FLI:2018}. Pinning impulsive control uses impulses on a selection of nodes to realize the network synchronization, and the collection of nodes to be pinned normally should be deliberately selected so that the impulses can tame the dynamics of every node with the help of the network topological structure. A handful of pinning schemes have been discussed with the impulsive control approach for the stabilization and synchronization problems \cite{JL-JK-JC-NM-CH:2011,JZ-QW-LX:2011}. Most of the existing results focused on time-triggered pinning impulses, and pinning impulsive control approach with event-triggered impulses has started to gain attention very recently (see, e.g., \cite{YS-XL:2022,WL-SP-ZF-TC-ZG:2022}).

Motivated by the above discussion, we study the synchronization problem for a class of CDNs with pinning impulsive control approach. A novel event-triggering algorithm is designed to determine the impulse times, and sufficient conditions on the network topology, impulsive control gains, and parameters in the event-triggering conditions are derived to guarantee the network synchronization. The sufficient condition on the network topology provides a guideline for the selection of pinning nodes. To allow arbitrary pinning schemes for the event-triggered impulsive controller, an adaptive tuning method on the network coupling strength is then introduced. With the adaptive coupling strength, the synchronization of CDNs can be realized via the proposed control method with an arbitrary selection of the pinning nodes. In comparison with the control methods in~\cite{YS-XL:2022,WL-SP-ZF-TC-ZG:2022}, the proposed event-triggering condition for each pinning node depends only on the node's synchronization error, while the triggering conditions in~\cite{YS-XL:2022,WL-SP-ZF-TC-ZG:2022} require the knowledge of the entire network's synchronization errors so that the norms of these error states can be sorted into the descending order. Compared with event-triggered pinning control (see, e.g., \cite{AA-FA-DL-GS-DVD-MDB-KHJ:2015,TW-KS-SS:2020}), event-triggered pinning impulses are instantaneous control inputs at a sequence of discrete moments, and the control system runs open-loop between two consecutive impulse times.

The rest of this paper is organized as follows. Section~\ref{Sec2} formulates the synchronization problem and introduces the event-triggering algorithm. Synchronization criteria are constructed in Section~\ref{Sec3}, and a simulation example is provided in Section~\ref{Sec4} to verify the  main results. Finally, some conclusions and possible future research are drawn in Section~\ref{Sec5}.

\emph{Notation.} Let $\mathbb{R}$ denote the set of real numbers, and denote $\mathbb{R}^n$ and $\mathbb{R}^{n \times n}$, respectively, the $n$-dimensional and $n\times n$-dimensional Euclidean space equipped with the Euclidean norm $\|\cdot\|$. Let $I_n\in\mathbb{R}^{n\times n}$ denote the $n\times n$ identity matrix. For matrices $A\in\mathbb{R}^{m\times n}$ and $B\in\mathbb{R}^{p\times q}$, the transpose of $A$ is $A^\top$, and let $A\otimes B\in\mathbb{R}^{mp\times nq}$ denote their Kronecker product. Let $\delta(t)$ represent the Dirac delta function which is infinity at $t=0$ and zero elsewhere, and it also satisfies the identity $\int^{\infty}_{-\infty} \delta(t) \mathrm{d}t=1$.

\section{Problem Formulation}\label{Sec2}

Consider a complex dynamical network with $N$ identical nodes

\begin{equation}\label{network}
\dot{x}_i(t)=f(t,x_i(t)) + c\sum^{N}_{j=1} a_{ij} \Gamma \left( x_j(t)-x_i(t) \right),~~i=1,2,...,N,
\end{equation}
where $x_i=(x_{i1},x_{i2},...,x_{in})^\top\in\mathbb{R}^n$ represents the state of the $i$th node, function $f:\mathbb{R}^+\times \mathbb{R}^n\mapsto \mathbb{R}^n$ is continuously differentiable, constant $c>0$ is the network coupling strength, and $\Gamma\in\mathbb{R}^{n\times n}$ denotes the inner coupling matrix which is assumed to be positive definite. Matrix $A=( a_{ij} )_{N \times N}$ characterizes the network topological structure, and each entry $a_{ij}$ is defined as follows:
\begin{itemize}
\item If $i\not=j$, then $a_{ij}=a_{ji}$, and they are positive if there exists a connection between nodes $i$ and $j$; otherwise, they are both zeros.
\item The diagonal entries of $A$ satisfy $a_{ii}=-\sum^N_{j=1,j\not=i}a_{ij}$, and $|a_{ii}|$ is called the degree of the $i$th node.

\end{itemize}
The definition of diagonal entries implies $\sum^N_{j=1} a_{ij}=0$, then network~\eqref{network} is equivalent to the following description:
\begin{equation}\label{network1}
\dot{x}_i(t)=f(t,x_i(t)) + c\sum^{N}_{j=1} a_{ij} \Gamma  x_j(t),~~i=1,2,...,N.
\end{equation}

The dynamics of an isolated node is described by the following system:
\begin{equation}\label{node}
\dot{z}(t)=f(t,z(t))
\end{equation}
where $z\in\mathbb{R}^n$ denotes the node state, and the continuously differentiable function $f$ satisfies the following assumption:
\begin{assumption}\label{Assumption}
Suppose that there exists a constant matrix $K$ such that
\begin{equation}\label{assumption}
(x-y)^{{\top}}\left(f(t,x)-f(t,y)\right)\leq (x-y)^\top K\Gamma(x-y)
\end{equation}
for any $x,y\in\mathbb{R}^n$.
\end{assumption} 
See \cite{WY-GC-JL:2009} for examples of various systems and CDNs that satisfy this assumption.

Synchronization problem of network~\eqref{network} will be investigated in the following sense:
\begin{equation}\label{synchronization}
\lim_{t\rightarrow \infty} \|x_i(t)-z(t)\|= 0,~~i=1,2,...,N.
\end{equation}
We say network~\eqref{network} is synchronized with the isolated node~\eqref{node}, or network~\eqref{network} achieves synchronization for short.

In this study, impulsive control method is applied to realize the network synchronization. Instead of controlling all the network nodes, only a portion of nodes are controlled by the impulses. We assume $l$ nodes are controlled by the impulses ($1\leq l\leq N$), and the rest $N-l$ nodes are free of impulses. Rearrange the node indexes so that the first $l$ nodes are to be controlled, then the pinning impulsive control network can be described as follows:

\begin{align}\label{network-pinning}
\dot{x}_i(t)&=f(t,x_i(t)) + c\sum^{N}_{j=1} a_{ij} \Gamma x_j(t) + u_i(t),~~i=1,2,...,l,\cr
\dot{x}_i(t)&=f(t,x_i(t)) + c\sum^{N}_{j=1} a_{ij} \Gamma x_j(t),~~i=l+1,l+2,...,N,
\end{align}
where
\begin{equation}\label{IC}
u_i(t)=-\sum_{k\in\mathbb{N}}d_i (x_i(t)-z(t))\delta(t-t^{(i)}_k),~~i=1,2,...,l
\end{equation}
are the impulsive controllers, $d_i\in\mathbb{R}$ are impulsive control gains, and $\{t^{(i)}_k\}_{k\in\mathbb{N}}$ are the sequences of impulse times to be determined. It should be noted that different nodes could have different sequences of impulse times.

Define synchronization error $e_i(t)=x_i(t)-z(t)$, then the error dynamics can be described as follows:
\begin{align}\label{network-error}
\dot{e}_i(t)&=f(t,x_i(t))-f(t,z(t)) + c\sum^{N}_{j=1} a_{ij} \Gamma e_j(t),~t\not=t^{(i)}_k,\cr
\Delta e_i(t)&= -d_i e_i(t),~t=t^{(i)}_k,~~i=1,2,...,l~\textrm{and}~k\in\mathbb{N},\cr
\dot{e}_i(t)&=f(t,x_i(t))-f(t,z(t)) + c\sum^{N}_{j=1} a_{ij} \Gamma e_j(t),~i=l+1,...,N.
\end{align}
where $\Delta e_i(t)=e_i(t^+)-e_i(t^-)$ represents the error state jump induced by the impulsive control input $u_i$ at the impulse time; $e_i(t^+)$ and $e_i(t^-)$ are the right and left limits of $e_i$ at time $t$, respectively. Throughout this research, we assume the network states are left-continuous at each impulse time, and hence, $e_i(t^-)=e_i(t)$ for all $t\geq t_0$. See \cite[Section~4.1]{XL-KZ:2019} for details of deriving the closed-loop system~\eqref{network-error} from the impulsive control system~\eqref{network-pinning} with~\eqref{IC}.

Next, define $V(t)=\sum^N_{i=1}V_i(t)$ with $V_i(t)=e^\top_i(t)e_i(t)$. To derive the sequence $\{t^{(i)}_k\}_{k\in\mathbb{N}}$ for node $i$ ($i=1,2,...,l$), we enforce $V_i$ to satisfy $V_i(t) < \alpha_i \exp\left({-\beta_i(t-t_0)}\right)$ for all $t\geq t_0$, where $\alpha_i$ and $\beta_i$ are positive constants, and $t_0$ is the initial time. Then, the impulsive control input $u_i$ is activated by the event
\begin{equation}\label{event0}
V_i(t) \geq \alpha_i \exp({-\beta_i(t-t_0)}),
\end{equation}
and the impulse/event times are the moments when the impulse/event occurs:
\begin{equation}\label{event}
t^{(i)}_{k+1}=\inf\left\{t>t^{(i)}_k:~V_i(t)\geq \alpha_i \exp({-\beta_i(t-t_0)}) \right\},~~i=1,2,...,l.
\end{equation}
Assume $V_i(t_0)<\alpha_i$ for $i=1,2,...,l$ so that the first event time $t^{(i)}_1$ is larger than the initial time $t_0$. The pinning impulsive controller for the $i$th node works as follows. Choose parameter $\alpha_i$ large enough so that $V_i(t_0)<\alpha_i$, then the event time $t^{(i)}_1$ is the first moment when $V_i(t)$ reaches the threshold $\alpha_i \exp({-\beta_i(t-t_0)})$. The impulse at this time should be properly designed to bring the value of $V_i$ down below the threshold. The second event time $t^{(i)}_2$ is determined similarly once $V_i(t)$ attains the threshold again. This process can be continued as long as $V_i$ goes over the threshold. If $V_i$ stays below the threshold after some event time, then there are finite events for node $i$. It can be seen that synchronization of the $i$th node with the isolate node can be achieved under the proposed algorithm provided $d_i\in(0,1)$. However, to ensure the entire network synchronization, sufficient conditions on the network topology should be established so that the rest $N-l$ nodes will also achieve synchronization with the isolated node.

\section{Synchronization Criteria}\label{Sec3}
In this section, some sufficient conditions are derived to guarantee the synchronization of network~\eqref{network-pinning}.
\begin{theorem}\label{theorem}
Suppose $\alpha_i>V_i(t_0)$ and $0<d_i<1$ for $i=1,2,...,l$, and denote $\gamma=\|K\|$. Synchronization of the network~\eqref{network-pinning} can be achieved if
\begin{equation}\label{condition2}
\Lambda<0
\end{equation}
where $\Lambda=\gamma I_{N-l}+c \bar{A}$ and $\bar{A}=(\bar{a}_{ij})_{(N-l)\times(N-l)}$ with $\bar{a}_{ij}=a_{l+i,l+j}$. Moreover, network~\eqref{network-pinning} does not exhibit Zeno behavior, that is, $\lim_{k\rightarrow \infty} t^{(i)}_k=\infty$ for each $i=1,2,...,l$.
\end{theorem}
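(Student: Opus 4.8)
The plan is to treat the pinned and unpinned nodes separately, and then settle the Zeno question with a boundedness argument. For the $l$ pinned nodes the convergence is essentially built into the event-triggering rule~\eqref{event}: the first event time exceeds $t_0$, and each impulse replaces $e_i$ by $(1-d_i)e_i$ with $0<d_i<1$, so every impulse scales $V_i$ by $(1-d_i)^2<1$ and drives it strictly below the threshold. Because an event fires at the exact instant $V_i(t)$ reaches $\alpha_i\exp(-\beta_i(t-t_0))$, a continuity-plus-induction argument shows $V_i(t)\le \alpha_i\exp(-\beta_i(t-t_0))$ for all $t$ in the maximal interval of existence. Hence $\|e_i(t)\|\to 0$ exponentially for $i=1,\dots,l$, independently of the network topology.

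The core of the theorem is the convergence of the $N-l$ uncontrolled nodes. Here I would introduce the cluster Lyapunov function $W(t)=\sum_{i=l+1}^N V_i(t)$ and differentiate along~\eqref{network-error}. Using Assumption~\ref{Assumption} on each term $e_i^\top(f(t,x_i)-f(t,z))$, the positive definiteness of $\Gamma$, and the symmetry of the block $\bar{A}$, the derivative $\dot W$ splits into three pieces: a nonlinear part contributing the factor $\gamma=\|K\|$, the intra-cluster coupling that assembles into the quadratic form of $c\bar{A}$, and the cross-coupling $2c\sum_{i=l+1}^N\sum_{j=1}^l a_{ij}e_i^\top\Gamma e_j$ linking the unpinned errors to the pinned ones. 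The first two pieces combine into a quadratic form whose coefficient matrix is precisely $\Lambda=\gamma I_{N-l}+c\bar{A}$ (weighted by $\Gamma$), so the hypothesis $\Lambda<0$ makes this part negative definite and yields $\dot W\le -2\mu W + p(t)$ for some $\mu>0$.

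The subtle point is that the unpinned cluster is \emph{not} autonomous: $p(t)$ is the cross-coupling perturbation, so $\Lambda<0$ alone does not close the argument. I would bound $p(t)$ by Young's inequality, splitting each product $\|e_i\|\,\|e_j\|$ (unpinned $i$, pinned $j$) so that a small multiple of $\|e_i\|^2$ is absorbed into the strictly negative term (legitimate because $\Lambda<0$ leaves room for an $\varepsilon I$ cushion), while the remaining factor is controlled by the pinned errors $\|e_j\|^2$, which decay exponentially by the first step. This produces $\dot W\le -\mu W + q(t)$ with $q(t)\to 0$ exponentially, and a comparison lemma (or the variation-of-constants estimate) then forces $W(t)\to 0$, hence $\|e_i(t)\|\to 0$ for every unpinned $i$. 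Together with the pinned nodes this establishes~\eqref{synchronization}.

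For the exclusion of Zeno behavior I would argue by contradiction while avoiding circularity, by first noting that the two steps above give \emph{boundedness} of every $e_j$ on the maximal interval $[t_0,T_{\max})$; hence $x_i=e_i+z$ stays in a compact set and, by continuity of $f$, $\dot V_i=2e_i^\top\dot e_i$ is bounded, say $|\dot V_i|\le L_i$, on any finite subinterval. Suppose some node $i$ had $t^{(i)}_k\to T^*<\infty$. On $(t^{(i)}_k,t^{(i)}_{k+1})$ the value $V_i$ must climb from $(1-d_i)^2\theta(t^{(i)}_k)$ back up to $\theta(t^{(i)}_{k+1})$, where $\theta(t)=\alpha_i\exp(-\beta_i(t-t_0))$; combining $|\dot V_i|\le L_i$, the elementary estimate $e^{-\beta_i\Delta}\ge 1-\beta_i\Delta$, and $\theta(t)\ge\theta(T^*)>0$ on $[t_0,T^*]$ yields the uniform lower bound $t^{(i)}_{k+1}-t^{(i)}_k\ge \theta(T^*)(2d_i-d_i^2)/(L_i+\theta(T^*)\beta_i)>0$, which contradicts accumulation at $T^*$. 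Hence $t^{(i)}_k\to\infty$ for every $i$, and in particular $T_{\max}=\infty$. I expect the main obstacle to be the second step, namely correctly isolating the $\Lambda$-form after the $\Gamma$-weighting and showing that the exponentially small cross-coupling does not destroy the decay, rather than the comparatively routine pinned-node and Zeno estimates.
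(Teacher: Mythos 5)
Your proposal matches the paper's proof in all three stages: the pinned nodes are handled by the triggering rule itself, the unpinned cluster is treated with $W(t)=\sum_{i=l+1}^N V_i(t)$, the $\Lambda\otimes\Gamma$ quadratic form, and a Young's-inequality absorption of the cross-coupling into an $\varepsilon I$ cushion against the exponentially decaying pinned errors, and Zeno behavior is excluded by contradiction using a constant bound on $\dot V_i$ between events. The only difference is cosmetic: the paper reaches its contradiction by passing to the limit in the implicit equation defining the inter-event lower bound $T^{(i)}_k$ (obtaining $(1-d_i)^2\alpha_i=\alpha_i$), whereas you extract an explicit uniform positive lower bound on $[t_0,T^*]$ via $e^{-\beta_i\Delta}\ge 1-\beta_i\Delta$; both rest on the same growth estimate and are equally valid.
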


\begin{proof} The proposed event-triggered pinning algorithm ensures that
\begin{equation}\label{Vipin}
V_i(t)\leq \alpha_i \exp\left({-\beta_i(t-t_0)}\right),~~i=1,2,...,l,
\end{equation}
that is, \eqref{synchronization} holds for $i=1,2,...,l$. Then, it is sufficient to show~\eqref{synchronization} holds for $i=l+1,l+2,...,N$. To do this, let $W(t)=\sum^N_{i=l+1} {V}_i(t)$ and $\bar{e}=(e^\top_{l+1},e^\top_{l+2},...,e^\top_{N})^\top$, then the derivative of $W$ along the trajectories of network~\eqref{network-pinning} yields
\begin{align}\label{derivative1}
\dot{W}(t)=& \sum^N_{i=l+1} 2e^\top_i(t)\dot{e}_i(t) \cr
= &\sum^N_{i=l+1} 2e^\top_i \left( f(t,x_i)-f(t,s) +c \sum^N_{j=l+1} a_{ij}\Gamma e_j + c \sum^l_{j=1} a_{ij}\Gamma e_j \right)\cr
\leq &2\bar{e}^\top \left( I_{N-l}\otimes(K\Gamma) + c \bar{A}\otimes\Gamma \right)\bar{e}+2\sum^N_{i=l+1} \sum^l_{j=1} \left(  c a_{ij} e^\top_i \Gamma e_j\right)\cr
\leq & 2\bar{e}^\top \left( I_{N-l}\otimes(K\Gamma) + c \bar{A}\otimes\Gamma + \varepsilon I_{n(N-l)} \right)\bar{e}  + \frac{\theta}{l} \sum^l_{j=1}e^\top_je_j \cr
\leq & 2\bar{e}^\top \left( \Lambda\otimes\Gamma + \varepsilon I_{n(N-l)} \right)\bar{e} + \theta \max_{1\leq j\leq l}\{V_j\} 
\end{align}
where constants $\varepsilon>0$ and 
$$\theta=\frac{1 }{2\varepsilon} (N-l)c^2 \gamma^2 l^2 \max_{{\substack{l+1\leq i\leq N \\ 1\leq j\leq l}}}\{a^2_{ij}\}.$$ 
Since $\Lambda<0$ and $\Gamma>0$, we choose a small enough $\varepsilon>0$ such that $\Lambda\otimes\Gamma + \varepsilon I_{n(N-l)}<0$, and then select a positive constant $\mu$ close to zero so that
\[
2\left(\Lambda\otimes\Gamma + \varepsilon I_{n(N-l)} \right)<-\mu.
\]
From~\eqref{derivative1} and~\eqref{Vipin}, we can conclude that
\begin{equation}\label{detivativeW}
\dot{W}(t)\leq -\mu W(t)+ \hat{\alpha} \exp({-\check{\beta}(t-t_0)}) \textrm{~for~} t\geq t_0,
\end{equation}
where $\hat{\alpha}=\theta\max_{1\leq j\leq l}\{\alpha_j\}$ and $\check{\beta}=\min_{1\leq j\leq l}\{\beta_j\}$. Without loss of generality, we assume $\mu\leq \check{\beta}$. Note that this can always be achieved by selecting small enough $\mu>0$ with the given parameters $\{\beta_j\}^l_{j=1}$.
Multiply both sides of~\eqref{detivativeW} by $\exp({\mu(t-t_0)})$, and the product rule of differentiation concludes
\begin{equation}
\frac{\mathrm{d}}{\mathrm{d}t}\left( \exp\left({\mu(t-t_0)}\right) W(t) \right)\leq \hat{\alpha} \exp({(\mu-\check{\beta})(t-t_0)}),
\end{equation}
then integrating both sides from $t_0$ to $t$ yields
\begin{align}
\exp({\mu(t-t_0)}) W(t) -W(t_0) \leq \frac{\hat{\alpha}}{\check{\beta}-\mu} \left( 1-\exp({-(\check{\beta}-\mu)(t-t_0)}) \right)
\end{align}
that is,
\begin{align}
W(t)\leq \exp({-\mu(t-t_0)}) \left( W(t_0) +\frac{\hat{\alpha}}{\check{\beta}-\mu} \right)
\end{align}
which implies
\begin{equation}\label{synchronization2}
\lim_{t\rightarrow\infty} \|e_i(t)\|=0 \textrm{~for~} i=l+1,l+2,...,N.
\end{equation}
Therefore, we conclude from~\eqref{Vipin} and~\eqref{synchronization2} that synchronization of network~\eqref{network-pinning} can be achieved.

Next, we will show the event-triggered impulsive control network~\eqref{network-pinning} is free of Zeno behavior.

For $t\not= t^{(i)}_k$ with $i=1,2,...,l$ and $k\in\mathbb{N}$, we have
\begin{align}
 \dot{V}_i(t) =&  2e^\top_i\left(  f(t,x_i)-f(t,s)+c\sum^N_{j=1}a_{ij}\Gamma e_j  \right)\cr
          \leq & 2e^\top_i K\Gamma e_i +2 c \sum^N_{j=1}e^\top_i a_{ij}\Gamma e_j \cr
          \leq & 2\|\Gamma\| \left( \gamma V_i(t) -c a_{ii} \|e_i\| \sum^N_{j=1}\|e_j\| \right) \cr
             = & 2\|\Gamma\| \left( \gamma V_i(t) -c a_{ii} \sqrt{V_i(t)} \left( \sum^l_{j=1} \sqrt{V_j(t)}  + \sum^N_{j=l+1} \sqrt{V_j(t)}  \right) \right) \cr
          \leq & 2\|\Gamma\| \left( \gamma \alpha_i -c a_{ii} \sqrt{\alpha_i} \left( \sum^l_{j=1} \sqrt{\alpha_j}  + (N-l)\sqrt{M}  \right) \right) 
\end{align}
where $M= W(t_0) +{\hat{\alpha}}{(\check{\beta}-\mu)^{-1}}$. Define the right-hand side of the above inequality as $\sigma_i$  ($i=1,2,...,l$) which are positive constants since $a_{ii}<0$. Then, for $t\in(t^{(i)}_k,t^{(i)}_{k+1})$, we have
\begin{equation}\label{Vit}
V_i(t)\leq  V_i({t^{(i)+}_k}) + \sigma_i(t-t^{(i)}_k) =  (1-d_i)^2V_i({t^{(i)}_k}) + \sigma_i(t-t^{(i)}_k)
\end{equation}
According to the definition of the event time $t^{(i)}_k$ in~\eqref{event}, we have $V_i({t^{(i)}_k})=\alpha_i \exp({-\beta_i(t^{(i)}_k-t_0)})$. By the fact that $(1-d_i)^2V_i({t^{(i)}_k})< V_i({t^{(i)}_k})$, the following equation of $t$ has a unique solution
\begin{equation}\label{equation}
(1-d_i)^2V_i({t^{(i)}_k}) + \sigma_i(t-t^{(i)}_k) = \alpha_i \exp({-\beta_i(t-t_0)})
\end{equation}
for $t>t^{(i)}_k$, denoted as $t^{(i)}_k+T^{(i)}_k$ with some $T^{(i)}_k>0$. From~\eqref{Vit} and~\eqref{Vipin}, we have $t^{(i)}_{k+1}\geq t^{(i)}_k+T^{(i)}_k$, that is, 
\begin{equation}\label{lowerbdd}
t^{(i)}_{k+1} - t^{(i)}_k\geq T^{(i)}_k>0
\end{equation}
with $i=1,2,...,l$. 

It should be noted that the lower bound $T^{(i)}_k$ of the inter-event time $t^{(i)}_{k+1} - t^{(i)}_k$ depends on the event time $t^{(i)}_k$. Thus, a uniform lower bound of the inter-event times $\{t^{(i)}_{k+1} - t^{(i)}_k\}_{k\in\mathbb{N}}$ can not be guaranteed by~\eqref{lowerbdd}. To show that network~\eqref{network-pinning} does not exhibit Zeno behavior, we use the following contradiction argument. Suppose that there exist some $i$ and a finite number $\bar{t}^{(i)}>0$ such that $\lim_{k\rightarrow \infty} t^{(i)}_k=\bar{t}^{(i)}$, that is, there are infinitely many state jumps for the $i$th node over the finite time interval $[t_0, \bar{t}^{(i)}]$. We then conclude from~\eqref{lowerbdd} that 
\[
\lim_{k\rightarrow \infty} t^{(i)}_{k+1} - t^{(i)}_k =\lim_{k\rightarrow \infty} T^{(i)}_k =0.
\]
From~\eqref{equation} and the definition of $T^{(i)}_k$, it can be derived that
\begin{equation}\label{totakelimit}
(1-d_i)^2 \alpha_i+ \sigma_i T^{(i)}_k \exp(\beta_i (t^{(i)}_k-t_0 )) = \alpha_i \exp(-\beta_i T^{(i)}_k).
\end{equation}
Since the event times $\{t^{(i)}_k\}_{k\in\mathbb{N}}$ are upper bounded by $\bar{t}^{(i)}$ and $0<d_i<1$, letting $k$ go to infinity leads to the contradiction that $(1-d_i)^2 \alpha_i= \alpha_i$. Therefore, network~\eqref{network-pinning} with the proposed event-triggering algorithm does not exhibit Zeno behavior.
\end{proof}

\begin{remark}
It's worth mentioning that if condition~\eqref{condition2} is satisfied for network~\eqref{network}, then the pinning feedback control method proposed in~\cite{WY-GC-JL:2009} can be applied to this network. Nevertheless, the proposed pinning impulsive control method have advantages over the control method in~\cite{WY-GC-JL:2009} in the following sense: 
\begin{itemize}
\item The $i$th node ($i=1,2,...,l$) of network~\eqref{network-pinning} runs open-loop between two consecutive impulses, while these nodes of the pinning control networks in~\cite{WY-GC-JL:2009} evolve closed-loop for all $t\geq t_0$.

\item The impulsive control gain $d_i$ characterizes the jump of error state $e_i$ at each impulse time, and any $d_i\in (0,1)$ will synchronize the network if $\Lambda<0$. However, the feedback control gains in~\cite{WY-GC-JL:2009} should be carefully determined to guarantee the network synchronization.

\item The impulse time sequences $\{t^{(i)}_k\}_{k\in\mathbb{N}}$ for different nodes are generally different, that is, not all these $l$ nodes are controlled at each impulse time. The pinning control method in~\cite{WY-GC-JL:2009} requires all these $l$ nodes to be controlled simultaneously for all $t\geq t_0$.
\end{itemize}
\end{remark}

Condition~\eqref{condition2} not only ensures the network synchronization but also provides the selection algorithm to derive these $l$ nodes to be pinned. To ensure $\Lambda<0$, it is necessary to require its diagonal entries negative, that is, $\gamma +c \bar{a}_{ii}<0$ for $i=l+1,l+2,...,N$. This implies that the degree of the $i$th node must be large enough to satisfy $-\bar{a}_{ii}>\gamma/c$. {Hence, the nodes with degrees not bigger than $\gamma/c$ should be controlled by the event-triggered impulses. To select the correct amount of nodes to be pinned, we can start with these nodes and define $\bar{A}$ accordingly. If $\gamma I+ c\bar{A}< 0$, then synchronization of network~\eqref{network-pinning} can be achieved. Otherwise, add node(s) and check if $\gamma I+ c\bar{A}< 0$ holds with $\bar{A}$ updated.} Repeat this process until condition~\eqref{condition2} is satisfied. See the example in Section~\ref{Sec4} for a detailed demonstration of the selection procedure. The worst scenario of this selection is that all the nodes need to be controlled by the event-triggered impulses, then the following corollary guarantees the network synchronization.

\begin{corollary}\label{corollary}
Suppose $\alpha_i>V_i(t_0)$ and $0<d_i<1$ for $i=1,2,...,N$, then synchronization of network~\eqref{network-pinning} can be achieved, and this network with the proposed event-triggered impulses does not exhibit Zeno behavior.
\end{corollary}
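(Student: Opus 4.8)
The plan is to treat Corollary~\ref{corollary} as the special case of Theorem~\ref{theorem} in which every node is pinned, i.e. $l=N$. First I would observe that when $l=N$, the set of uncontrolled nodes $\{l+1,\ldots,N\}$ is empty, so the matrix $\bar{A}=(\bar a_{ij})_{(N-l)\times(N-l)}$ and the associated $\Lambda=\gamma I_{N-l}+c\bar{A}$ become empty ($0\times 0$) objects. Consequently the structural requirement $\Lambda<0$ of Theorem~\ref{theorem} is vacuously satisfied, since there is no uncontrolled subnetwork whose dynamics must be tamed by the topology. This is the crux: the whole burden that condition~\eqref{condition2} places on the network coupling disappears once no node is left free of impulses.

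With that reduction in hand, I would invoke the two conclusions of Theorem~\ref{theorem} directly. For the synchronization claim, recall that the event-triggered pinning algorithm forces $V_i(t)\leq\alpha_i\exp(-\beta_i(t-t_0))$ for every controlled node; see~\eqref{Vipin}. Since now $i=1,2,\ldots,N$ are all controlled, this bound holds for all nodes, and hence $\lim_{t\rightarrow\infty}\|e_i(t)\|=0$ for each $i=1,2,\ldots,N$. The part of the proof of Theorem~\ref{theorem} dealing with the uncontrolled block $W(t)=\sum_{i=l+1}^{N}V_i(t)$, the estimate~\eqref{derivative1}, and the comparison argument yielding~\eqref{synchronization2} are simply not needed here, because the empty sum gives $W\equiv 0$. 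Thus synchronization in the sense of~\eqref{synchronization} follows at once.

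For the absence of Zeno behavior, I would reuse the inter-event lower bound argument from the proof of Theorem~\ref{theorem} verbatim, since that portion never relied on condition~\eqref{condition2}. The key inequality~\eqref{lowerbdd}, $t^{(i)}_{k+1}-t^{(i)}_k\geq T^{(i)}_k>0$, and the contradiction obtained by taking $k\to\infty$ in~\eqref{totakelimit}---which forces the absurdity $(1-d_i)^2\alpha_i=\alpha_i$ when $0<d_i<1$---apply unchanged to each $i=1,2,\ldots,N$. The only mild point to check is that the constants $\sigma_i$ remain well defined and positive: their definition uses $a_{ii}<0$ together with the uniform bound $M=W(t_0)+\hat{\alpha}(\check{\beta}-\mu)^{-1}$ on the tail sum. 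With $l=N$ the term $(N-l)\sqrt{M}$ in the bound for $\dot V_i$ vanishes, so $\sigma_i=2\|\Gamma\|\bigl(\gamma\alpha_i-ca_{ii}\sqrt{\alpha_i}\sum_{j=1}^{N}\sqrt{\alpha_j}\bigr)$ stays positive, and the Zeno-exclusion argument goes through.

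I do not anticipate a genuine obstacle here, as the statement is a corollary by design; the main thing to get right is the bookkeeping of the degenerate $l=N$ case---specifically, confirming that $\Lambda<0$ is vacuously true and that every estimate in the proof of Theorem~\ref{theorem} involving the free nodes either disappears or specializes cleanly. If one preferred a self-contained argument over citing the vacuous condition, the alternative is to note that with all nodes pinned the per-node bounds~\eqref{Vipin} already imply $V(t)=\sum_{i=1}^{N}V_i(t)\to 0$ without any appeal to the network topology, which makes the synchronization conclusion transparent and independent of $A$.
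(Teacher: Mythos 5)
Your proposal is correct and matches the paper's own treatment: the paper states in the remark following the corollary that it is a direct consequence of Theorem~\ref{theorem} with $l=N$, exactly the specialization you carry out. Your additional bookkeeping---that $\Lambda<0$ is vacuous, that the per-node bounds~\eqref{Vipin} alone yield synchronization, and that the Zeno argument with $\sigma_i>0$ survives the degenerate case---is sound and consistent with the paper.
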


\begin{remark}
This corollary is a direct result of Theorem~\ref{theorem} with $l=N$. Since the event-trigger~\eqref{event} is independently defined for each node, only part of the network nodes are controlled at each event time. This is the major advantage of the proposed control method over the one in~\cite{KZ-EB:2022} where all the nodes are required to be controlled by every impulse simultaneously. Furthermore, if $l<N$, another advantage over the event-triggered control method in~\cite{KZ-EB:2022} is that a pinning mechanism is incorporated with the impulsive control approach, and hence $N-l$ nodes are free of impulses over the entire time span of the network evolution. Condition~\eqref{condition2} requires the coupling strength $c$ is large enough, i.e., $c> \gamma/|\lambda_{max}(\bar{A})|$, so that synchronization of these impulse-free nodes can be achieved through the network connections with the pinning nodes.
\end{remark}

Next, we generalize the proposed control method to deal with synchronization problem of network~\eqref{network-pinning} with an adaptive coupling strength, that is, $c$ is replaced by $c(t)$ with the following adaptive law:
\begin{align}\label{adaptivelaw}
\dot{c}(t)&=\zeta \sum^N_{j=l+1} \left( x_j(t)-z(t) \right)^\top\Gamma\left( x_j(t)-z(t) \right) \cr
c(t_0)&=c_0
\end{align}
where $c_0\geq 0$ and $\zeta>0$ are arbitrary constants.
\begin{theorem}\label{theorem2}
Suppose Assumption~\ref{Assumption} holds, then network~\eqref{network-pinning} with the adaptive coupling strength and adaptive law~\eqref{adaptivelaw} achieves synchronization.
\end{theorem}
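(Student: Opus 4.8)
The plan is to separate the pinned and unpinned nodes exactly as in the proof of Theorem~\ref{theorem}. The event-triggered mechanism still enforces $V_i(t)\le\alpha_i\exp(-\beta_i(t-t_0))$ for $i=1,\dots,l$, so the pinned errors decay exponentially; in particular $\tilde e=(e_1^\top,\dots,e_l^\top)^\top$ is bounded and $\|\tilde e(t)\|^2\le L_0\exp(-\check\beta(t-t_0))$ for some $L_0>0$, and it remains only to show $\bar e\to 0$. Two structural facts drive the argument. First, the adaptive law gives $\dot c(t)=\zeta\,\bar e^\top(I_{N-l}\otimes\Gamma)\bar e\ge 0$, so $c(t)$ is nondecreasing. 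Second, $\bar A$ is a principal submatrix of the negative Laplacian $A$, so $-\bar A$ is a grounded Laplacian and is positive definite as soon as the network is connected and at least one node is pinned; this is the structural replacement for~\eqref{condition2} and is exactly what makes the pinning set arbitrary. I would write $\lambda_0:=-\lambda_{\max}(\bar A)>0$.

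The natural candidate is the augmented function $U(t)=W(t)+\tfrac1\zeta\big(c(t)-c^{*}\big)^2$, with $c^{*}$ chosen so large that $\gamma I_{N-l}+c^{*}\bar A<0$. Differentiating along~\eqref{network-pinning}, using Assumption~\ref{Assumption} on the unpinned nodes and the cancellation of $\zeta$ between $\tfrac1\zeta$ and $\dot c$, yields
\[
\dot U(t)\le 2\bar e^\top\!\big((\gamma I_{N-l}+c\bar A)\otimes\Gamma\big)\bar e+2c\,\bar e^\top(\tilde A\otimes\Gamma)\tilde e+2(c-c^{*})\,\bar e^\top(I_{N-l}\otimes\Gamma)\bar e,
\]
where $\tilde A=(a_{ij})_{l+1\le i\le N,\,1\le j\le l}$ collects the unpinned-to-pinned couplings. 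The cross term splits, via Young's inequality, into a multiple of $c\,\bar e^\top(I_{N-l}\otimes\Gamma)\bar e$ absorbed by the strongly negative $c\bar A$ block, and an exponentially decaying forcing proportional to $c\,\|\tilde e\|^2$. The main obstacle surfaces here: $c(t)$ multiplies these terms and is only known to be nondecreasing, and the adaptive residual $2(c-c^{*})\bar e^\top(I_{N-l}\otimes\Gamma)\bar e$ need not have a favourable sign once $c>c^{*}$, so a monotone-Lyapunov estimate $\dot U\le 0$ is not available.

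I would resolve this by proving directly that $c(t)$ is bounded. Writing $W_\Gamma:=\bar e^\top(I_{N-l}\otimes\Gamma)\bar e=\dot c/\zeta$ and estimating $\dot W$ alone (Assumption~\ref{Assumption}, $\bar e^\top(\bar A\otimes\Gamma)\bar e\le-\lambda_0 W_\Gamma$, and Young on the cross term) gives $\dot W\le(a-\lambda_0 c)\,W_\Gamma+b\,c\,\|\tilde e\|^2$ for suitable $a,b>0$. Feeding $W_\Gamma=\dot c/\zeta$ back in, the coupling contributes an exact derivative, and integration produces
\[
0\le W(t)\le C+\frac{a}{\zeta}\,c(t)-\frac{\lambda_0}{2\zeta}\,c(t)^2+b\int_{t_0}^{t} c(s)\exp(-\check\beta(s-t_0))\,\mathrm{d}s .
\]
If $c\to\infty$, the quadratic self-damping $-c(t)^2$ cannot be balanced by the linear term nor by the exponentially weighted integral; a Gronwall-type estimate shows that whenever $c$ grows the integral stays bounded, which forces the right-hand side to $-\infty$ and contradicts $W\ge0$. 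Hence $c(t)\uparrow c_\infty<\infty$. I expect this boundedness step to be the crux, since it is precisely here that the negative definiteness of $\bar A$ and the exponential decay of the pinned errors must be combined quantitatively.

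Finiteness of $c_\infty$ gives $\int_{t_0}^\infty W_\Gamma\,\mathrm{d}t=(c_\infty-c_0)/\zeta<\infty$, so $W\in L^1$. With $c$ now bounded and $\|\tilde e\|^2$ decaying exponentially, the inequality above reads $\dot W(t)\le h(t)$ with $h\in L^1$; integrating bounds $W$, hence $\bar e$, so $\dot W$ is bounded and $W$ is uniformly continuous. Barbalat's lemma then yields $W(t)\to 0$, i.e. $e_i\to 0$ for $i=l+1,\dots,N$. Combined with the exponential decay of the pinned errors, this establishes~\eqref{synchronization} and completes the proof.
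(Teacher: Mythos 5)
Your proof is correct and, for the decisive step, takes a genuinely different route from the paper. The paper splits on the monotone limit of $c(t)$: if $c(t)\to\infty$ it falls back on Theorem~\ref{theorem} (somewhat informally, since that theorem is stated for a constant coupling strength), and if $c(t)\le c_{max}$ it builds the augmented function $\bar e^\top\bar e+\frac{\eta}{\zeta}(c(t)-\bar c)^2\left(1+\exp(-\check\beta(t-t_0))\right)$ with $\bar c\ge c_{max}$ chosen a posteriori so large that the $(c-\bar c)\dot c$ term makes the quadratic form negative definite while the $-\frac{\check\beta\eta}{\zeta}(c-\bar c)^2\exp(-\check\beta(t-t_0))$ term absorbs the exponentially decaying forcing from the pinned nodes, yielding $\dot W\le-\mu\bar e^\top\bar e$ and then Barbalat applied to $\int\|\bar e\|^2$. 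You instead prove boundedness of $c$ outright: the identity $W_\Gamma=\dot c/\zeta$ turns $(a-\lambda_0 c)W_\Gamma$ into the exact derivative of $\frac{a}{\zeta}c-\frac{\lambda_0}{2\zeta}c^2$, and integration leaves a concave quadratic in $c(t)$ that must stay nonnegative. One small imprecision: the forcing integral $\int_{t_0}^{t}c(s)\exp(-\check\beta(s-t_0))\,\mathrm{d}s$ does \emph{not} in general stay bounded when $c$ grows, so the Gronwall remark as phrased is off; but since $c$ is nondecreasing the integral is at most $c(t)/\check\beta$ times a constant, i.e.\ linear in $c(t)$, and the quadratic term dominates it anyway, so your contradiction stands after replacing that sentence with the monotonicity bound. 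Your route buys a cleaner statement (boundedness of $c$ is a conclusion rather than a case hypothesis, and no augmented Lyapunov function with a time-varying weight is needed) at the price of making explicit two hypotheses the paper also relies on but leaves implicit: that $-\bar A$ is positive definite, which you correctly identify as a grounded-Laplacian/connectivity condition even though Remark~\ref{remarkTH2} advertises topology-independence, and the boundedness of $\dot{\bar e}$ needed for the uniform-continuity step in Barbalat, which Assumption~\ref{Assumption} alone (a one-sided estimate) does not deliver and which the paper's own proof asserts equally without justification.
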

\begin{proof} According to the adaptive law~\eqref{adaptivelaw}, coupling strength $c(t)$ is strictly increasing. If $c(t)\rightarrow \infty$ as $t\rightarrow \infty$, then there exists a large enough time $T>t_0$ such that $c(t)>\frac{\gamma}{|\lambda_{max}(\bar{A})|}$ for all $t> T$, and then condition~\eqref{condition2} holds, and Theorem~\ref{theorem} implies the network synchronization can be achieved. Hence, it is sufficient to consider bounded coupling strength. In what follows, we suppose $c(t)<c_{max}$ for $t\geq t_0$, where $c_{max}>0$ is a constant. Consider the following Lyapunov-like function for these $N-l$ uncontrolled nodes
\begin{equation}
W(t)=\bar{e}^\top(t) \bar{e}(t) +\frac{\eta}{\zeta}(c(t)-\bar{c})^2 \left( 1+  \exp({-\check{\beta}(t-t_0)}) \right)
\end{equation}
where $\bar{e}=(e^\top_{l+1},e^\top_{l+2},...,e^\top_{N})^\top$, $\check{\beta}=\min_{1\leq i\leq N}\{\beta_i\}$, constants $\eta>0$ and $\bar{c}\geq c_{max}$ are to be determined. Similar to the discussion of~\eqref{derivative1}, differentiation of $W(t)$ along the trajectory of~\eqref{network-error} with adaptive coupling strength $c(t)$ gives
\begin{align}\label{dW}
\dot{W}(t) =& 2\bar{e}^\top \dot{\bar{e}} + 2\frac{\eta}{\zeta} (c(t)-\bar{c})\dot{c}(t)  \left( 1+\exp({-\check{\beta}(t-t_0)}) \right)\cr
     & -\frac{\check{\beta}\eta}{\zeta} (c(t)-\bar{c})^2    \exp({-\check{\beta}(t-t_0)} )  \cr
\leq & 2\bar{e}^\top \left( \left( \gamma I_{N-l} + c(t) \left(\bar{A}+2\eta I_{n-l}\right) - \eta\bar{c}I_{n-l} \right)\otimes\Gamma +\varepsilon I_{n(N-l)} \right) \bar{e}\cr
     & +\hat{\alpha} \exp({-\check{\beta}(t-t_0)})-\frac{\check{\beta}\eta}{\zeta} (c(t)-\bar{c})^2    \exp({-\check{\beta}(t-t_0)})
\end{align}
where $\varepsilon>0$ and
\[
\hat{\alpha}=\frac{1 }{2\varepsilon} (N-l)c_{max}^2 \gamma^2 l^2 \max_{{\substack{l+1\leq i\leq N \\ 1\leq j\leq l}}}\{a^2_{ij}\}  \max_{1\leq i\leq l}\{\alpha_i\}.
\]
Since $\bar{A}<0$, there exists a small enough $\eta>0$ such that $\bar{A}+2\eta I_{n-l}<0$. From the fact that $c(t)\leq c_{max}$, we can then make $\bar{c}$ sufficiently large so that both
\begin{equation}\label{dW1}
\left( \gamma I_{N-l} + c(t) \left(\bar{A}+2\eta I_{n-l}\right) - \eta\bar{c}I_{n-l} \right)\otimes\Gamma +\varepsilon I_{n(N-l)}<0
\end{equation}
and
\begin{equation}\label{dW2}
\frac{\check{\beta}\eta}{\zeta} (c(t)-\bar{c})^2 \geq \hat{\alpha} ~~\textrm{for all}~t\geq t_0.
\end{equation}
hold. We then can conclude from~\eqref{dW} with~\eqref{dW1} and~\eqref{dW2}  that there exists a small $\mu>0$ such that
\begin{equation}\label{derivativeLyapunov}
\dot{W}(t)\leq -\mu \bar{e}^\top(t) \bar{e}(t) \textrm{ for }t\geq t_0.
\end{equation}
Define $g(t):=\int^t_{t_0} \bar{e}^\top(s)\bar{e}(s)\mathrm{d}s$ which is a positive, increasing, and differentiable function. Integrating both sides of~\eqref{derivativeLyapunov} from $t_0$ to $t$ yields $0\leq W(t)\leq W(t_0)-\mu g(t)$, which implies $g$ is upper bounded, and hence, $\lim_{t\rightarrow\infty}g(t)=\int^{\infty}_{t_0}\bar{e}^\top(s)\bar{e}(s)\mathrm{d}s$ is finite. Furthermore, we can obtain from~\eqref{derivativeLyapunov} that $\bar{e}$ is bounded as $\dot{W}(t)\leq 0$. The boundedness of both $c(t)$ and $\bar{e}(t)$ concludes that  the second derivative $\ddot{g}(t)=2\bar{e}^\top(t)\dot{\bar{e}}(t)$ is bounded, and then $\dot{g}(t)$ is uniformly continuous. According to Barbalat's lemma~\cite[Lemma~4.2]{JJES-WL:1991}, we have $\dot{g}(t)\rightarrow 0$ as $t\rightarrow \infty$, that is, $\|e_j(t)\|\rightarrow 0$ as $t\rightarrow \infty$ for $j=l+1,l+2,...,N$.


Therefore, the above discussion with~\eqref{Vipin} concludes $\|e_i(t)\|\rightarrow 0$ as $t\rightarrow \infty$ for $i=1,2,...,N$.
\end{proof}

\begin{remark}\label{remarkTH2}
The adaptive law~\eqref{adaptivelaw} is inspired by but different from the one in~\cite{WY-GC-JL:2009}. In~\eqref{adaptivelaw}, only unpinned nodes are used, and the parameter $\zeta>0$ can be chosen arbitrarily. It should be mentioned that Theorem~\ref{theorem2} is not applicable if $c(t)$ is unbounded. Nevertheless, combination of Theorems~\ref{theorem} and~\ref{theorem2} in the following sense will provide an effective approach to synchronize the network while ensuring the boundedness of the coupling strength. First, the sufficient condition on the coupling strength, $c>\gamma/|\lambda_{max}(\bar{A})|$, can be obtained from Theorem~\ref{theorem}. Then, we monitor the coupling strength $c(t)$ with the adaptive law~\eqref{adaptivelaw}, and prescribe a small $\epsilon>0$. When $c(t)$ grows bigger than $\gamma/|\lambda_{max}(\bar{A})|$ and reaches the value of $\gamma/|\lambda_{max}(\bar{A})| + \epsilon$ at some time $t^*$, we keep $c(t)=\gamma/|\lambda_{max}(\bar{A})| + \epsilon$ for all $t\geq t^*$, and Theorem~\ref{theorem} implies the network synchronization. If $c(t)$ never attains the value of $\gamma/|\lambda_{max}(\bar{A})|$, then Theorem~\ref{theorem2} will guarantee the synchronization of network~\eqref{network-pinning} with adaptive law~\eqref{adaptivelaw}. Therefore, by enforcing $c(t)\leq \gamma/|\lambda_{max}(\bar{A})| + \epsilon$, smaller coupling strength may be preserved to achieve the network synchronization, and Zeno behavior can be excluded as discussed in the proof of Theorem~\ref{theorem} since $c(t)$ and all the error states are bounded. See the example in Section~\ref{Sec4} for a demonstration of the adaptive $c(t)$. Furthermore, Theorem~\ref{theorem2} is independent of the network topology and the pinning scheme. Therefore, the above combination approach guarantees the network synchronization by arbitrarily pinning a fraction of the network nodes.
\end{remark}

\section{An Example}\label{Sec4}
In this section, we consider network~\eqref{network} with $n=3$, $N=8$, $c=8$, $\Gamma=\textrm{diag}(1,2,1)$, and
\begin{align}\label{examplef}
f(t,z(t))=\begin{bmatrix}
35z_2(t)-35z_1(t)\\
-7z_1(t)-z_1(t)z_3(t)+28z_2(t)\\
z_1(t)z_2(t)-3z_3(t)
\end{bmatrix}.
\end{align}
From~\cite{WY-GC-JL:2009}, we know that Assumption~\ref{Assumption} is satisfied with $K=\gamma I_3$ and $\gamma=30.9342$. The network topology described by the coupling matrix $A$ is given in Fig.~\ref{topology}. {It can be observed that Node 1 has the smallest degree which is smaller than $\gamma/c$, then node 1 should be pinned but $c<\gamma/|\lambda_{max}(\bar{A})|$ with the corresponding $\bar{A}$. Hence, more nodes should be controlled by the event-triggered impulses. Table~\ref{table} listed some examples of nodes selections. It can be seen that synchronization of network~\eqref{network-pinning} can be achieved through pinning control nodes 1, 2, and 5 by the proposed event-triggered impulses.

To illustrate the effectiveness of our theoretical results, we select the following parameters such that the sufficient conditions in Theorem~\ref{theorem} are satisfied. Let $\beta_1=0.8$, $\beta_2=0.6$, $\beta_5=0.9$, and $z(0)=(0.1, -0.2, 0.1)^\top$. Initial states $x_{ij}(0)$ ($i=1,2,...,8$ and $j=1,2,3$) and constants $d_1,d_2,d_5$ are selected uniformly and at random from interval $(-1,1)$, then let $\alpha_i=1.01\|x_i(0)-z(0)\|^2$ for $i=1,2,5$. Simulation results are illustrated in~Fig.~\ref{simulation}. The first figure in~Fig.~\ref{simulation} shows the Lyapunov function which verifies the network synchronization since $V(t)\rightarrow 0$ as $t\rightarrow \infty$. The network synchronization is also illustrated in the second figure with the error states~$e_{11},e_{22},e_{53}$. To have a clear view of the impulse effects, the third figure shows the magnified portions of these error states over the time period~$[5.5,6]$. According to the analysis in Table~\ref{table}, nodes 1, 2, and 5 are selected to be impulsively pinned. However, it can be observed that only nodes 1 and 5 are controlled over the time interval $[5.5,6]$, and the event times are different for these two nodes. The reason that node 2 is free of impulses over this time interval is that $V_2(t)$ stays below the threshold $\alpha_2 \exp\left({-\beta_2(t-t_0)}\right)$ for $5.5\leq t\leq 6$.

In the next simulation, Fig.~\ref{couplingstrength} shows the adaptive coupling strength $c(t)$ with different values of $\zeta$ in the adaptive law~\eqref{adaptivelaw}. It can be seen that $c(t)$ is bounded for both values of $\zeta$, which implies the network synchronization. However, $c(t)$ attains a much larger value than the required one $\gamma/|\lambda_{max}(\bar{A})|=7.2276$ when $\zeta=2$. For $\zeta=0.2$, $c(t)$ approaches $6.5568$ which is smaller than $\gamma/|\lambda_{max}(\bar{A})|$. Intuitively, small $\zeta$ corresponds to relatively small $\dot{c}(t)$, and hence the coupling strength grows slower, and a smaller bound of $c(t)$ may be approached. To ensure the coupling strength maintains a small value, it is desirable to enforce the coupling strength to be bounded by using the combination approach introduced in Remark~\ref{remarkTH2}, and then the upper bound of the coupling strength will not be bigger than  $7.2276$ plus an arbitrarily small constant $\epsilon>0$.


\begin{figure}[!t]
\centering
\includegraphics[width=3in]{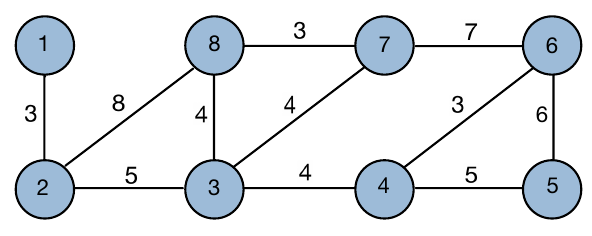}
\caption{Network topology.}
\label{topology}
\end{figure}

\begin{table}[!t] 
\caption{ }
\label{table}
\centering
\begin{tabular}{ ||c|c|c|c|c|| }
\hline 
& & & & \\[-.6em]
$l$ & Node Indexes  & $\lambda_{max}(\Bar{A})$ & $\frac{\gamma}{|\lambda_{max}(\Bar{A})|}$ & $c> \frac{\gamma}{|\lambda_{max}(\Bar{A})|}$ \\  [5pt]  \hline\hline
& & & & \\[-.7em]
1 & 1 & -0.3241 & 95.4465 & False \\ [2pt] \cline{1-5}  
& & & & \\[-.7em]
2 & 1,2 & -1.0968 & 28.2040 & False\\ [2pt] \cline{1-5}  
& & & & \\[-.7em]
2 & 1,5 & -1.7849 & 17.3311 & False\\ [2pt]  \cline{1-5}  
& & & & \\[-.7em]
3 & 1,4,5 & -2.4465 &  12.6443 & False \\ [2pt] \cline{1-5}  
& & & & \\[-.7em]
3 & 1,2,5 & -4.2800 &  7.2276 & \textbf{True}\\ [2pt] \cline{1-5} 
\hline 
\end{tabular} 
\end{table}

\begin{figure}[!t]
\centering
\includegraphics[width=3.5in]{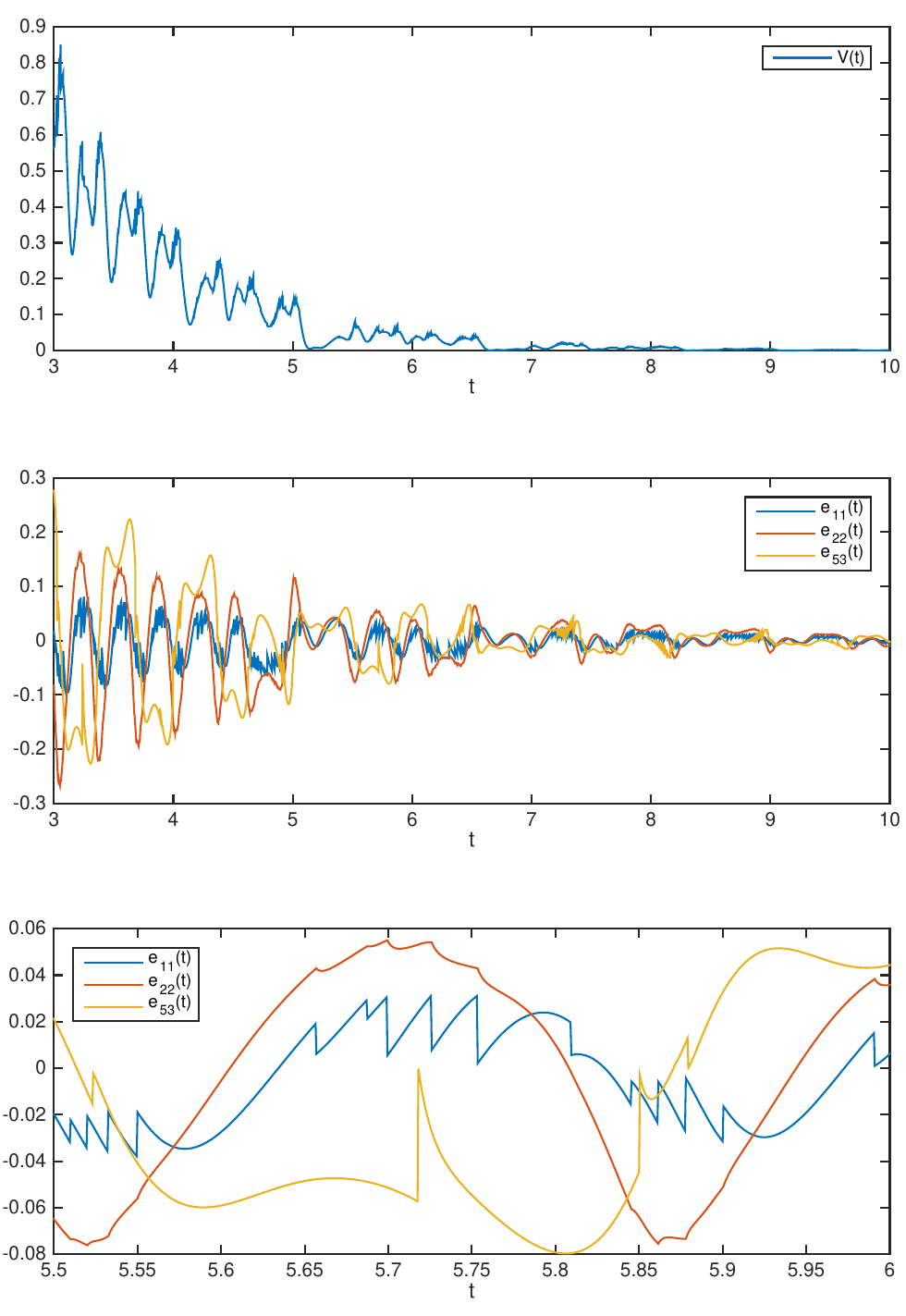}
\caption{Simulations of network~\eqref{network-pinning} with~\eqref{examplef}.}
\label{simulation}
\end{figure}

\begin{figure}[!t]
\centering
\includegraphics[width=3.5in]{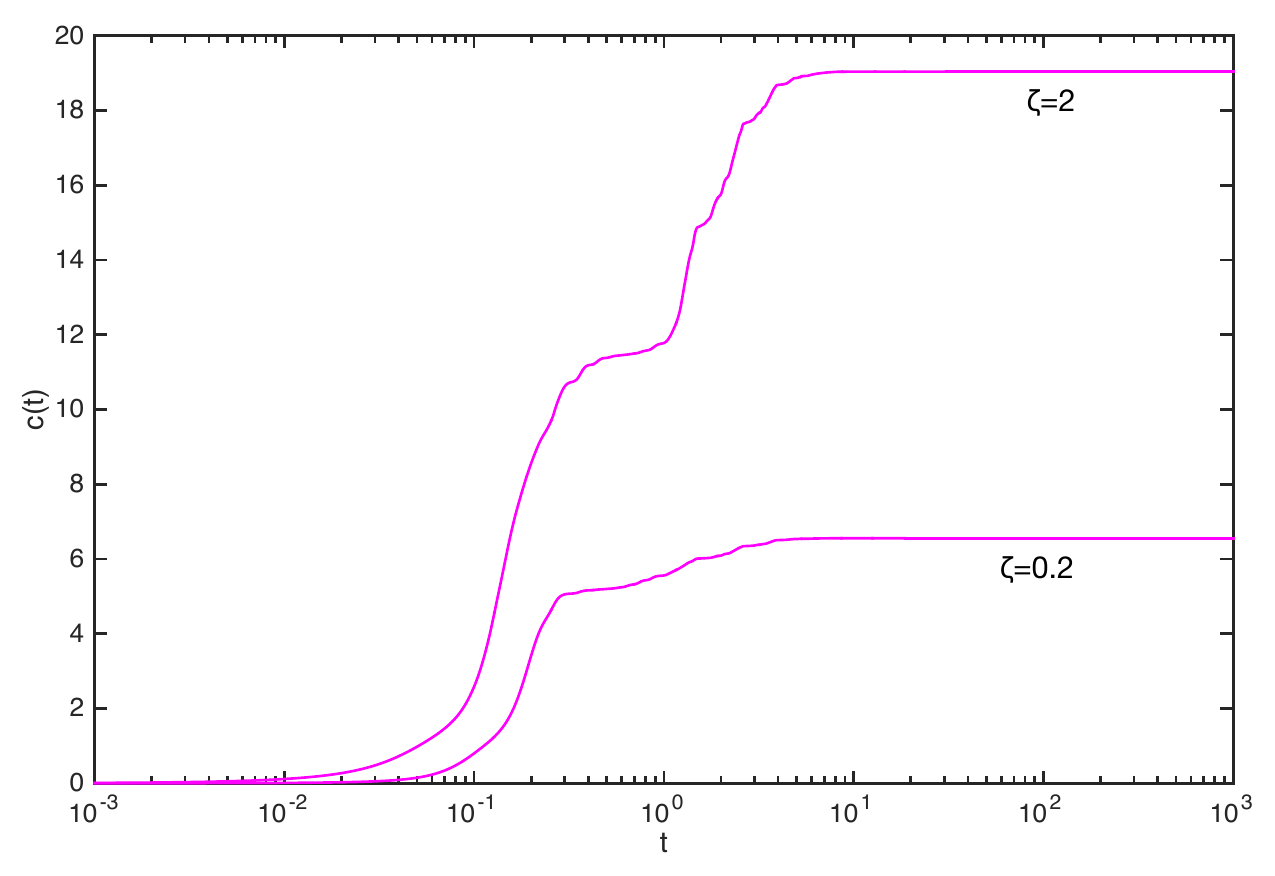}
\caption{Coupling strength $c(t)$ with adaptive law~\eqref{adaptivelaw}.}
\label{couplingstrength}
\end{figure}

\section{Conclusions}\label{Sec5}

Impulsive synchronization of complex dynamical networks has been investigated. An event-triggered pinning algorithm has been proposed for the impulsive controller, and sufficient conditions on the network topology have been established to realize the network synchronization. It has been shown that synchronization can also be achieved by arbitrary node selection with the proposed event-triggering algorithm and well-designed adaptive coupling strength. A simulation example has been discussed to verify the theoretical results. Future research could focus on the study of the proposed control method for the synchronization of networks with time delays in both the node dynamics and network connections.



\ifCLASSOPTIONcaptionsoff
  \newpage
\fi


%
%
%
%
%





\begin{thebibliography}{1}
\bibitem{EE:2012} 
E. Estrada, \emph{The structure of complex networks: theory and applications}. Oxford University Press, 2012.

\bibitem{CWW:2007}
C.W. Wu, \emph{Synchronization in complex networks of nonlinear dynamical systems}. World Scientific, 2007.


\bibitem{FD-FB:2014}
F. D\"orfler and F. Bullo, ``Synchronization in complex networks of phase oscillators: A survey", \emph{Automatica}, vol. 50, pp. 1539-1564, 2014.

\bibitem{YT-FQ-HG-JK:2014}
Y. Tang, F Qian, H. Gao, and J. Kurths, ``Synchronization in complex networks and its application - A survey of recent advances and challenges". \emph{Annual Reviews in Control}, vol. 38, no. 2, pp. 184-198, 2014.

\bibitem{TY:2001}
T. Yang, \emph{Impulsive control theory (Vol. 272)}. Springer Science \& Business Media, 2001.

\bibitem{BM-EYR:2003}
B. Miller and E.Y. Rubinovich, \emph{Impulsive control in continuous and discrete-continuous systems}. Springer Science \& Business Media, 2003.

\bibitem{XL-KZ:2019}
X. Liu and K. Zhang, \emph{Impulsive Systems on Hybrid Time Domains}. Springer, Cham, 2019.

\bibitem{KZ-EB:2022}
K. Zhang and E. Braverman, ``Event-triggered impulsive control for nonlinear systems with actuation delays". \emph{IEEE Transactions on Automatic Control}, vol. 68, no. 1, pp. 540-547, 2023.

\bibitem{XL-XY-JC:2020}
X. Li, X Yang, and J. Cao, ``Event-triggered impulsive control for nonlinear delay systems". \emph{Automatica}, vol. 117, 108981, 2020.

\bibitem{BJ-JL-XL-JQ:2021}
B. Jiang, J. Lu, X. Li, and J. Qiu, ``Event-triggered impulsive stabilization of systems with external disturbances". \emph{IEEE Transactions on Automatic Control}, vol. 67, no. 4, pp. 2116-2122, 2021.

\bibitem{XL-JC-XL-MA-UAA:2020}
X. Lv, J. Cao, X. Li, M. Abdel-Aty, and U.A. Al-Juboori, ``Synchronization analysis for complex dynamical networks with coupling delay via event-triggered delayed impulsive control". \emph{IEEE transactions on Cybernetics}, vol. 51, no. 11, pp. 5269-5278, 2020.

\bibitem{KU-FAR-XL-RR:2021}
K. Udhayakumar, F.A. Rihan, X. Li, and R. Rakkiyappan, ``Quasi-bipartite synchronisation of multiple inertial signed delayed neural networks under distributed event-triggered impulsive control strategy". \emph{IET Control Theory \& Applications}, vol. 15, no. 12, pp. 1615-1627, 2021.

\bibitem{SS-KU-DG-RR:2022}
S. Shanmugasundaram, K. Udhayakumar, D. Gunasekaran, and R. Rakkiyappan, ``Event-triggered impulsive control design for synchronization of inertial neural networks with time delays". \emph{Neurocomputing}, vol. 483, pp. 322-332, 2022.

\bibitem{ROG-MCC-GHS:1997}
R.O. Grigoriev, M.C. Cross, and H.G. Schuster, ``Pinning control of spatiotemporal chaos". \emph{Physical Review Letters}, vol. 79, no. 15, 2795, 1997.

\bibitem{WY-GC-JL:2009}
W. Yu, G. Chen, and J. L\"u, ``On pinning synchronization of complex dynamical networks". \emph{Automatica}, vol. 45, pp. 429-435, 2009.

\bibitem{YO-MJ-XY:2016}
Y. Orouskhani, M. Jalili, and X. Yu, ``Optimizing dynamical network structure for pinning control". \emph{Scientific Reports}, vol. 6, no. 1, pp. 1-13, 2016.

\bibitem{PD-FG-FLI:2018}
P. DeLellis, F. Garofalo, and F.L. Iudice, ``The partial pinning control strategy for large complex networks". \emph{Automatica}, vol. 89, pp. 111-116, 2018.

\bibitem{JL-JK-JC-NM-CH:2011}
J. Lu, J. Kurths, J. Cao, N. Mahdavi, and C. Huang, ``Synchronization control for nonlinear stochastic dynamical networks: pinning impulsive strategy". \emph{IEEE Transactions on Neural Networks and Learning Systems}, vol. 23, no. 2, pp. 285-292, 2011.

\bibitem{JZ-QW-LX:2011}
J. Zhou, Q. Wu, and L. Xiang, ``Pinning complex delayed dynamical networks by a single impulsive controller". \emph{IEEE Transactions on Circuits and Systems I: Regular Papers}, vol. 58, no. 12, pp. 2882-2893, 2011.

\bibitem{YS-XL:2022}
Y. Shen and X. Liu, ``Event-based master-slave synchronization of complex-valued neural networks via pinning impulsive control". \emph{Neural Networks}, vol. 145, pp. 374-385, 2022.

\bibitem{WL-SP-ZF-TC-ZG:2022}
W. Lin, S. Peng, Z. Fu, T. Chen, and Z. Gu, ``Consensus of fractional-order multi-agent systems via event-triggered pinning impulsive control". \emph{Neurocomputing}, vol. 494, pp. 409-417, 2022.

\bibitem{AA-FA-DL-GS-DVD-MDB-KHJ:2015}
A. Adaldo, F. Alderisio, D. Liuzza, G. Shi, D.V. Dimarogonas, M.D. Bernardo, and K.H. Johansson, ``Event-triggered pinning control of switching networks". \emph{IEEE Transactions on Control of Network Systems}, vol. 2, no. 2, pp. 204-213, 2015.

\bibitem{TW-KS-SS:2020}
T. Wakasa, K. Sawada, and S. Shin, ``Event-triggered switched pinning control for merging or splitting vehicle platoons". \emph{IFAC-PapersOnLine}, vol. 53, no. 2, pp. 15134-15139, 2020.

\bibitem{JJES-WL:1991}
J.-J. E. Slotine and W. Li, \emph{Applied Nonlinear Control}. Englewood Cliffs, New Jersey: Prentice-Hall, 1991. 


\end{thebibliography}
\end{document}